\newcommand{\R}{\mathbb{R}}
\newcommand{\grad}{\mathrm{grad}}
\newcommand{\Span}{\mathrm{span}}
\newtheorem{theorem}{Theorem}[section]
\newtheorem{proposition}[theorem]{Proposition}
\newtheorem{definition}[theorem]{Definition}
\newtheorem{thma}{Theorem}
\newtheorem*{thma*}{Theorem}
\theoremstyle{remark}
\newtheorem{remark}[theorem]{Remark}
\newtheorem*{conjecture}{Conjecture}
\newcommand{\ppn}[1]{\left.\frac{\partial}{\partial #1} \right|_{#1=0}}
\newcommand{\ppnn}[2]{\left.\frac{\partial^2}{\partial #1  \partial #2} \right|_{#1=#2 =0}}
\DeclareMathOperator{\Int}{int}
\numberwithin{equation}{section}
\numberwithin{figure}{section}
\begin{document}
	
\UseRawInputEncoding

\title[Geodesic Anosov flows]{Geodesic Anosov flows, hyperbolic closed geodesics and stable ergodicity}
\author{Gerhard Knieper}
\address{Dept.\ of Mathematics, Ruhr University Bochum, 44780 Bochum, Germany}
\email{gerhard.knieper@rub.de}

\author{Benjamin H. Schulz}
\address{Dept.\ of Mathematics, Ruhr University Bochum, 44780 Bochum, Germany}
\email{benjamin.schulz-c95@rub.de}

\date{\today}

\begin{abstract}

In this paper we show that the geodesic flow of a Finsler metric is Anosov if and only if there exists a $C^2$ open neighborhood of Finsler metrics all of whose closed geodesics are hyperbolic.
For surfaces this result holds also for Riemannian metrics.  This follows from a recent result
of Contreras and Mazzucchelli.
Furthermore, geodesic flows of Riemannian or Finsler metrics on surfaces are $C^2$ stably ergodic if and only if they are Anosov. 
\end{abstract}

\thanks{The authors are partially supported by the German Research Foundation (DFG),
CRC TRR 191, \textit{Symplectic structures in geometry, algebra and dynamics.}}
\maketitle

\section{Introduction}
Let $V$ be a closed manifold  and  $\| \cdot \|$ be a norm induced by a smooth Riemannian metric on $V$.
A  smooth fixed point free flow $\phi^t: V \to V$ on $V$ is called Anosov if there exist constants $k, C > 0$ and  a continuous flow invariant splitting 
$$
 TSM =E^s \oplus E^u\oplus E^0,
$$
where $ E^0$ is the 1-dimensional bundle given by the span of the infinitesimal generator of $\phi^t$
and where
$$
\|D \phi^t(p) v\| \le C \cdot e^{-kt} \| v \|
$$
for all $v \in E^s(p), t \ge 0$, as well as
$$
\| D\phi^{-t} (p) v \| \le C e^{-kt} \|v \|
$$
for all $v \in E^u(v)$, $t \ge 0$. It is well known that Anosov flows are structurally stable, ergodic and even mixing with respect to a smooth flow invariant measure \cite{A67}. Furthermore, the growth rate of periodic orbits is exponential. A particularly interesting and important class are given by Anosov flows induced by a Riemannian or Finsler metric.
In particular, if $(M,g)$ is a closed Riemannian manifold of negative curvature the geodesic flow on the unit tangent bundle is Anosov (see e.g.  \cite{gK02} for a proof).
 A remarkable theorem due to Klingenberg \cite{wk71}  asserts that such metrics do not have conjugate points.
For Finsler metrics this is due to \cite{PP94}. A main result of our paper is the following characterization of Anosov flows induced by Finsler metrics on closed manifolds and Riemannian metrics on closed surfaces in terms of periodic orbits.
\begin{thma}\label{thm:charact.}
Let $M$ be a closed manifold and denote by $\mathcal F_{\mathrm{hyp}}(M)$  the set of Finsler metrics all of whose closed geodesic are hyperbolic. Then the $C^2$ interior of $\mathcal F_{\mathrm{hyp}}(M)$ 
are exactly the set of Finsler metrics whose geodesic flow is Anosov. 

Moreover, if $M$ is a closed surface and $\mathcal R_{\mathrm{hyp}}(M)$ is the set of Riemannian metrics all of whose closed geodesic are hyperbolic then the $C^2$ interior of
$\mathcal R_{\mathrm{hyp}}(M)$ are exactly the set of Riemannian metrics whose geodesic flow is Anosov. 
\end{thma}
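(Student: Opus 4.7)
The theorem contains two implications; one is soft and the other is the real content. For the easy direction (Anosov $\Rightarrow$ $C^2$-interior), I would invoke structural stability: Anosov flows form a $C^1$-open class in the space of smooth flows, and the assignment $F\mapsto \phi_F^t$ sending a Finsler metric to its geodesic flow is continuous from the $C^2$-topology on $\mathcal F(M)$ into the $C^1$-topology on flows (since the geodesic spray involves one derivative of the metric). Hence if $\phi_F^t$ is Anosov, so is $\phi_{\tilde F}^t$ for all $\tilde F$ in a $C^2$-neighborhood of $F$, and Anosov flows have only hyperbolic closed orbits.

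For the harder direction ($C^2$-interior $\Rightarrow$ Anosov), the natural route is a Mañé/Franks dichotomy: if the geodesic flow of $F$ is not Anosov, then it exhibits ``almost neutral'' behavior along some recurrent orbit, which can be upgraded by a small perturbation to a genuine non-hyperbolic closed geodesic of a nearby metric. Concretely, I would proceed as follows. First, observe that the geodesic flow preserves the Liouville measure, so by Poincaré recurrence the entire unit tangent bundle is non-wandering; thus hyperbolicity on the non-wandering set is the same as the Anosov property. Second, assume $\phi_F^t$ is not Anosov; then by the standard argument of Mañé there must exist closed orbits whose linearized Poincaré maps are arbitrarily close to having an eigenvalue on the unit circle (otherwise a uniform dominated splitting would extend from periodic orbits to the whole manifold by density plus Poincaré recurrence and yield hyperbolicity). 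Third, apply a Franks-type perturbation lemma, tailored to the class of Finsler metrics: given a closed geodesic $\gamma$ of $F$ and any symplectic matrix sufficiently close to the linearized Poincaré map along $\gamma$, there exists an arbitrarily $C^2$-small perturbation $\tilde F$ of $F$, supported in a tubular neighborhood of $\gamma$, such that $\gamma$ remains a closed geodesic of $\tilde F$ and its new Poincaré map equals the prescribed matrix. Using this lemma one can open up a unit eigenvalue on an almost-neutral orbit to produce a non-hyperbolic closed geodesic of some $\tilde F$ arbitrarily close to $F$, contradicting $F\in\mathrm{int}_{C^2}\mathcal F_{\mathrm{hyp}}(M)$.

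For the surface Riemannian statement, the perturbation lemma above cannot be taken verbatim because Riemannian metrics on a surface form a much thinner subclass of Finsler metrics, so one has far fewer parameters to tune the Poincaré map. Here I would simply cite the Contreras--Mazzucchelli theorem alluded to in the abstract, which furnishes exactly the needed conclusion for Riemannian surfaces: a $C^2$-open family of metrics with only hyperbolic closed geodesics must be Anosov.

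\textbf{Main obstacle.} The principal technical step is the Finsler Franks lemma: constructing an explicit $C^2$-small deformation of a Finsler norm in a neighborhood of a closed geodesic that (i) preserves the geodesic itself and (ii) realizes a prescribed symplectic perturbation of its linearized Poincaré map. One must check that the fiberwise-convex, positively homogeneous structure of Finsler metrics is preserved under the perturbation and that the $C^2$-size can be controlled independently of the period of $\gamma$ — this is delicate for long closed orbits, where the perturbation must be spread out to avoid blowing up second derivatives. Everything else is a matter of combining this lemma with standard hyperbolic dynamics.
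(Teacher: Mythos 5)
Your easy direction (Anosov implies $C^2$-interior via structural stability, and Anosov flows have only hyperbolic closed orbits) matches the paper. The hard direction is where you genuinely diverge, and your route has a real gap that the paper is designed to avoid.

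The paper does \emph{not} attack the problem through a Ma\~n\'e/Franks perturbation scheme within the class of Finsler metrics. Instead it (i) invokes Newhouse's theorem, which for a $C^2$-dense set of Hamiltonians near $F$ yields the dichotomy ``Anosov on the energy level or a quasi-elliptic closed orbit exists''; (ii) projects those Hamiltonians back onto Finsler co-metrics via Proposition~\ref{prop:metrics}, so that membership in the open set $\Int(\mathcal F_{\mathrm{hyp}}(M))$ rules out the quasi-elliptic alternative and forces the approximating flows to be Anosov; and (iii) concludes via the observation that Anosov metrics have no conjugate points, that the no-conjugate-points condition is $C^2$-closed, and that by Ruggiero and Contreras--Iturriaga--Sanchez the $C^2$-interior of the no-conjugate-points metrics is exactly the Anosov metrics. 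Step (iii) is essential: Anosov is not a $C^2$-closed condition, so one cannot pass from ``$F_n$ Anosov and $F_n\to F$'' directly to ``$F$ Anosov'' without the conjugate-points detour, and your sketch does not address this.

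The concrete gap in your plan is the step you yourself flag as the ``main obstacle'': a Franks-type lemma for Finsler metrics that preserves a given closed geodesic while realizing a prescribed symplectic perturbation of its linearized Poincar\'e map, with $C^2$-size controlled uniformly in the period. This is not a technical footnote but essentially the entire difficulty; for Riemannian metrics on surfaces it is precisely what Contreras--Mazzucchelli had to prove, and in the Finsler setting no such lemma is established or cited. Your appeal to ``the standard argument of Ma\~n\'e'' (extracting almost-neutral periodic orbits from the failure of Anosov) is also not routine in the conservative contact setting: the dissipative star-flow theory does not transfer automatically to energy hypersurfaces, and making it precise is again a substantial theorem, not a lemma. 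The paper's insight is that Newhouse's generic-Hamiltonian dichotomy, together with the fiberwise-convexity argument of Proposition~\ref{prop:metrics} that converts a nearby Hamiltonian into a nearby Finsler metric, replaces both of these hard steps at once. In the surface Riemannian case your plan coincides with the paper's: both cite Contreras--Mazzucchelli for the dichotomy, since the Newhouse/Hamiltonian route does not stay inside the Riemannian class.
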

\begin{conjecture}
We conjecture that Theorem \ref{thm:charact.} should also hold for reversible Finsler metrics or Riemannian metrics on closed manifolds with arbitrary dimension.
\end{conjecture}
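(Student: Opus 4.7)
For the easy direction, suppose the geodesic flow $\phi^t$ of a Finsler metric $g$ is Anosov. Restricting the continuous hyperbolic splitting $TSM = E^s \oplus E^u \oplus E^0$ to any closed orbit shows that its linearized Poincar\'e map is hyperbolic, so all closed geodesics of $g$ are hyperbolic. Since Anosov flows are $C^1$ structurally stable \cite{A67} and the geodesic spray depends $C^1$-continuously on the metric with respect to the $C^2$ topology, a whole $C^2$-neighborhood of $g$ consists of Anosov metrics and hence lies in $\mathcal{F}_{\mathrm{hyp}}(M)$; the Riemannian surface case is an instance of the same argument.

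For the converse direction, my plan is to prove the contrapositive via the following perturbation statement: if the geodesic flow of a $C^\infty$ Finsler (respectively Riemannian) metric $g$ on $M$ is not Anosov, then $g$ can be $C^2$-approximated by metrics possessing a non-hyperbolic closed geodesic. Granting this, no non-Anosov metric can sit in the $C^2$-interior of $\mathcal{F}_{\mathrm{hyp}}(M)$ or $\mathcal{R}_{\mathrm{hyp}}(M)$. For the Riemannian surface case the perturbation statement is essentially the conclusion of the Contreras--Mazzucchelli theorem referenced in the abstract: a $C^2$-generic surface metric is either Anosov or carries an elliptic closed geodesic, so starting from a non-Anosov $g$ one finds arbitrarily close metrics with an elliptic, in particular non-hyperbolic, closed geodesic.

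For general closed $M$ in the Finsler setting I would establish the perturbation statement by a Ma\~n\'e-type dichotomy adapted to the symplectic, Liouville-preserving geodesic flow. Let $\Lambda$ denote the closure of the hyperbolic periodic orbits of $\phi^t$. If the symplectic-transverse splitting on $\Lambda$ is not dominated, then a Franks' lemma for Finsler geodesic flows, applied in Fermi coordinates along a sufficiently long closed geodesic, produces a $C^2$-small perturbation of $g$ whose Poincar\'e map along an iterate of that orbit acquires a pair of eigenvalues on the unit circle. If instead a dominated splitting exists on $\Lambda$ but the flow still fails to be Anosov, Ma\~n\'e's ergodic closing lemma yields a quasi-hyperbolic periodic orbit whose weakest Lyapunov exponent can be driven to zero by a further $C^2$-small perturbation, producing a non-hyperbolic closed geodesic of a nearby metric.

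The principal obstacle is the Franks' lemma for Finsler geodesic flows: given a hyperbolic closed geodesic $\gamma$, one must realize an arbitrary $C^2$-small perturbation of the linearized Poincar\'e return map along $\gamma$ by a $C^2$-small modification of the Finsler fundamental tensor that is supported in a tubular neighborhood of $\gamma$ and still defines a smooth strongly convex Finsler norm. Riemannian versions of this perturbation lemma have been developed by Contreras, Paternain, Visscher and collaborators; the Finsler adaptation requires controlling the additional directional degrees of freedom of the fundamental tensor and carefully verifying that the perturbation remains within the class of smooth Finsler metrics. This is the technical heart of the argument and the step where the reversibility issue alluded to in the conjecture enters.
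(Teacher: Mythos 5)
The statement you are trying to prove is labelled a \emph{conjecture} in the paper, and the authors give no proof of it; there is nothing to compare against. Indeed the whole point of stating it as a conjecture is that the argument used for Theorem~\ref{thm:charact.} in the non-reversible Finsler case (via Newhouse's Theorem~\ref{thm:Newhouse1} and Proposition~\ref{prop:metrics}) does not go through: Newhouse perturbs the Hamiltonian in the full space of $C^2$ Hamiltonians, and there is no reason the perturbed $H_n$ should satisfy $H_n(-u)=H_n(u)$ or be a Riemannian quadratic form. Hence the resulting metric $F_n$ need not be reversible or Riemannian, and one cannot conclude that it lies in the open set $\Int(\mathcal{F}_{\mathrm{hyp}}(M))$ one started from. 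The surface Riemannian case is rescued only by the separate theorem of Contreras and Mazzucchelli (Theorem~\ref{thm:CM}), which has no higher-dimensional analogue in the literature.

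Your ``easy direction'' argument is fine and matches what is implicit in Theorem~\ref{thm:conj1}. For the converse, what you have written is a research program, not a proof, and you are honest about this: you explicitly flag the Franks' lemma for Finsler (or reversible Finsler, or higher-dimensional Riemannian) geodesic flows as the missing ingredient. That is precisely the gap that makes this statement a conjecture rather than a theorem. A Ma\~n\'e-style dichotomy (dominated splitting on the closure of hyperbolic periodic orbits, quasi-hyperbolic closing, etc.) requires a perturbation lemma that realizes arbitrary small symplectic perturbations of the linearized Poincar\'e map by perturbations \emph{within the class of metrics under consideration}. For Riemannian metrics the space of achievable perturbations of the Jacobi/Riccati equation is constrained by curvature-tensor symmetries, and even the existing Franks-type lemmas for geodesic flows (Contreras--Paternain on surfaces, Visscher, Lazrag--Rifford--Ruggiero in higher dimensions) do not yet yield the full statement needed here; for reversible Finsler metrics one has the additional constraint $F(-v)=F(v)$. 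Until such a lemma is established, the contrapositive argument you sketch in your third and fourth paragraphs cannot be completed, so the conjecture remains open.
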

As an application we show for surfaces that stable ergodicity (or stable transitivity) of the geodesic flow is equivalent to the Anosov property.
\begin{thma} \label{thm:stable-ergodicity}
Let $M$ be a closed surface and and denote by $\mathcal F_{\mathrm{erg}}(M)$  resp. $\mathcal R_{\mathrm{erg}}(M)$ the set of Finsler  resp.  Riemannian metrics whose geodesic flows are ergodic. 
 Then the $C^2$ interior of $\mathcal F_{\mathrm{erg}}(M)$  resp. $\mathcal R_{\mathrm{erg}}(M)$ are exactly the set of Finsler resp. Riemannian metrics whose geodesic flows are Anosov. The same holds if we replace ergodic metrics by metrics whose geodesic flows are topologically transitive.
\end{thma}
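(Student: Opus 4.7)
The plan is to prove both inclusions of the equality, with Theorem~\ref{thm:charact.} furnishing the key bridge between hyperbolicity of all closed geodesics and the Anosov property.

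First I would verify the easy inclusion: any Anosov geodesic flow lies in the $C^2$ interior of $\mathcal R_{\mathrm{erg}}(M)$ and $\mathcal F_{\mathrm{erg}}(M)$ (and of their topologically transitive analogues). This follows from two classical facts. First, the Anosov condition is $C^1$-open among vector fields, so a $C^2$-small perturbation of the Finsler (or Riemannian) metric still produces an Anosov geodesic flow. Second, a $C^2$ Anosov flow preserving a smooth invariant measure---here the Liouville measure on $SM$---is ergodic and mixing by the theorem of Anosov cited in the introduction, hence in particular topologically transitive.

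For the converse, which is the substantive direction, assume $g$ lies in the $C^2$ interior of $\mathcal R_{\mathrm{erg}}(M)$; the Finsler case and the transitive variants are analogous. By Theorem~\ref{thm:charact.} it suffices to show that $g$ lies in the $C^2$ interior of $\mathcal R_{\mathrm{hyp}}(M)$. I would argue by contraposition: suppose some $g'$ arbitrarily $C^2$-close to $g$ admits a non-hyperbolic closed geodesic $\gamma$. I will then produce a further $C^2$-small perturbation of $g'$ whose geodesic flow is neither ergodic nor topologically transitive, contradicting the hypothesis.

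The surface hypothesis enters crucially here because $\dim(SM)=3$, so the Poincar\'e return map $P$ along $\gamma$ is an area-preserving diffeomorphism of a $2$-disc fixing the origin, and non-hyperbolicity forces its linearization to have eigenvalues on the unit circle. Using a perturbation in the space of Riemannian (or Finsler) metrics supported in a tubular neighborhood of $\gamma$---of the type developed by Contreras and Contreras--Mazzucchelli---I would arrange the fixed point to be elliptic with Diophantine rotation number and nondegenerate first Birkhoff twist coefficient. Moser's twist theorem then furnishes a positive-measure family of $P$-invariant closed curves accumulating at the origin; by suspension these yield invariant $2$-tori in $SM$ enclosing small solid tori around $\gamma$. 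Hence the geodesic flow of the perturbed metric admits a nontrivial invariant open set of positive but not full Liouville measure, breaking both ergodicity and topological transitivity simultaneously.

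The main obstacle I expect is the metric-realization step: ensuring that an arbitrary $C^\infty$-small perturbation of the Poincar\'e return map $P$, specifically the independent adjustment of the rotation number and of the first Birkhoff twist coefficient, is induced by a $C^2$-small perturbation of the metric $g$ itself, supported near $\gamma$. This is a standard but delicate perturbation lemma in the spirit of Klingenberg--Takens and Contreras--Paternain, and I would invoke a version valid for Riemannian and Finsler metrics on surfaces to close the argument.
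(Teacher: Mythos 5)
Your proposal follows essentially the same route as the paper: reduce via Theorem~\ref{thm:charact.} to showing that metrics in the $C^2$ interior of the ergodic set have all closed geodesics hyperbolic, and argue by contradiction that a nearby non-hyperbolic closed geodesic can be perturbed to an elliptic orbit of twist type whose invariant tori (via Moser/Siegel--Moser) break both ergodicity and transitivity. The ``metric-realization'' perturbation lemma you flag as the delicate step is exactly what the paper delegates to Klingenberg--Takens in the Riemannian case and to Rademacher--Taimanov (based on Carballo--Miranda) in the Finsler case.
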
 

\section{Generic properties of Hamiltonian systems}
Let $(V,\omega)$ be a symplectic manifold, i.e. $V$ is a smooth manifold and $\omega$ is a non-degenerate smooth skew symmetric and closed two form. Since  $\omega$ is non-degenerate, $V$ is of even dimension $2n$.
 Let $H: V \to \R$ be a $C^{r+1}(V)$ time independent Hamiltonian function. This induces a $C^r$ Hamiltonian vector field $X_H$ defined by
$$
\omega_x(X_H(x), v) =dH(x)(v)
$$
for all $x \in V$ and $v \in T_xV$. We denote by $\phi_{H}^t$ the Hamiltonian flow induced by the vector field $X_H$ and for $c \in H(V)$ we denote
by $V_H(c) = H^{-1}(c)$ the associated energy level. Since   
$$ 
0 = \omega_x(X_H(x), X_H(x)) = dH(x)(X_H(x)) = \ppn t \phi_{H}^t(x),
$$
  we have that each energy surface is invariant under the Hamiltonian flow $\phi_{H}^t$.
If $c$ is a regular value of $H$ then  $V_H(c)$ is a  $C^{r-1}$ manifold of odd dimension. Furthermore $R = \{R_x\}_{x \in V_H(c)}$ given by 
$$
R_x :=\{ v \in T_x V_H(c) \mid \omega_x(v, w) = 0, \; \text{for all} \; w \in T_xV_H(c) \}
$$
defines a line bundle, called the characteristic line bundle of $(\omega, V_H(c))$. In particular, $X_H(x) \in R_x $ and if $c$ is regular $X_H(x) \not= 0$ for all $x \in  V_H(c)$ and therefore generates the characteristic line field $R$.
Furthermore, if $L_x \subset  T_x V_H(c)$ is a complement of $R_x$ then $ \omega_x: L_x \times L_x \to \R$ is non-degenerate.\\
It follows from the above discussion that for two Hamiltionians $H, H' \in C^2(V)$ on a symplectic manifold $(V, \omega)$ with regular values $c, c'$ and with the same energy surface $V_H(c) = V_{H'}(c')$ the Hamiltonian vector fields 
 $X_H$ and  $X_{H'}$  are linearly dependent on $V_H(c)$ and that therefore the orbits of the associated Hamiltonian flows agree up to parametrization.\\
 
Under certain conditions $V_H(c)$ is a contact manifold.
\begin{definition}
A contact manifold is a pair $(M, \theta)$ consisting of a $2n-1$ dimensional manifold $M$ together with a one form $\theta$ such that $\theta \wedge (d\theta)^{n-1}$ defines a volume form on $M$.
Furthermore, the vector field $R_\theta$ is called the Reeb vector field if $\theta(R_\theta) =1$ and $d\theta(R_\theta, \cdot) = 0$.
\end{definition}
\begin{proposition}
Let $(V,\omega)$ be an exact symplectic manifold, i.e. there is a one form $\theta$ such that $\omega =d \theta$.  Let $H \in C^2(V)$ be a Hamiltonian function and $c \in \R$ be a regular value of $H$ such that 
$\theta(X_H(x)) \not= 0$ for all $x \in V_H(c)$. Then $(V_H(c), \theta_{\mid V_H(c) })$ is a contact manifold.
\end{proposition}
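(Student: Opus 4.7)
The plan is to verify that $\theta \wedge (d\theta)^{n-1}$, viewed as a form on $V_H(c)$, is nowhere vanishing; since contactness is a pointwise condition, I fix $x \in V_H(c)$ and produce an explicit basis of $T_xV_H(c)$ on which this $(2n-1)$-form evaluates to a nonzero number.

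First I would use the decomposition $T_xV_H(c) = R_x \oplus L_x$ already supplied by the excerpt. The line $R_x$ is spanned by $X_H(x)$ (nonzero since $c$ is regular), and the excerpt already records that $\omega_x$ restricts to a non-degenerate bilinear form on the $(2n-2)$-dimensional complement $L_x$. I therefore pick a symplectic basis $e_1, f_1, \ldots, e_{n-1}, f_{n-1}$ of $L_x$, so that $X_H(x), e_1, f_1, \ldots, e_{n-1}, f_{n-1}$ is a basis of $T_xV_H(c)$, and on such a basis $(d\theta)^{n-1}$ restricted to $L_x$ evaluates to $(n-1)!$.

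Next I would evaluate $\theta \wedge (d\theta)^{n-1}$ on this basis via the standard expansion
\[
(\alpha \wedge \beta)(v_0, \ldots, v_{2n-2}) = \sum_{j=0}^{2n-2}(-1)^j \alpha(v_j)\,\beta(v_0, \ldots, \hat v_j, \ldots, v_{2n-2})
\]
with $\alpha = \theta$, $\beta = (d\theta)^{n-1}$, and $v_0 = X_H(x)$. Every term with $j \geq 1$ carries the factor $(d\theta)^{n-1}(X_H(x), \ldots)$ with the remaining arguments in $T_xV_H(c)$. Combining the defining relation $i_{X_H}\omega = dH$ with the Leibniz rule
\[
i_{X_H}(d\theta)^{n-1} = (n-1)\, dH \wedge (d\theta)^{n-2},
\]
together with $dH|_{T_xV_H(c)} = 0$, I conclude that every such cross term vanishes. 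Hence only $j = 0$ survives and
\[
\theta \wedge (d\theta)^{n-1}\bigl(X_H(x), e_1, f_1, \ldots, e_{n-1}, f_{n-1}\bigr) = \theta(X_H(x)) \cdot (n-1)!,
\]
which is nonzero by hypothesis, proving that $\theta|_{V_H(c)}$ is a contact form.

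There is no real obstacle here; the entire argument hinges on the single observation that $X_H$ is $\omega$-orthogonal to all of $T_xV_H(c)$ (a restatement of $i_{X_H}\omega = dH$ annihilating $\ker dH$), which forces all interior products of $X_H$ with powers of $d\theta$ to vanish on the energy level. This leaves $\theta(X_H)$ as the sole possible source of degeneracy, and the hypothesis rules it out.
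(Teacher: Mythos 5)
Your proof is correct and follows the same overall architecture as the paper: decompose $T_xV_H(c)$ as a direct sum of the line spanned by $X_H(x)$ and a $(2n-2)$-dimensional complement $L_x$ on which $d\theta=\omega$ is non-degenerate, then evaluate $\theta\wedge(d\theta)^{n-1}$ on a corresponding basis to get a nonzero multiple of $\theta(X_H(x))$. The one genuine difference lies in where the cross terms are killed. The paper chooses the specific complement $L_x=\ker\bigl(\theta|_{T_xV_H(c)}\bigr)$, which is indeed transverse to $X_H(x)$ precisely because $\theta(X_H(x))\neq 0$; with that choice $\theta$ vanishes on $L_x$, so in the wedge expansion only the $j=0$ term can survive and no interior-product calculus is needed. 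You instead allow $L_x$ to be an arbitrary complement of $R_x$ and dispose of the cross terms by observing that $i_{X_H}(d\theta)^{n-1}=(n-1)\,dH\wedge(d\theta)^{n-2}$, which vanishes identically on $T_xV_H(c)$ because $dH$ annihilates the energy level. Both are tight; the paper's choice of complement hides the cancellation inside the basis and gives a shorter write-up, while your version avoids having to pick a preferred complement and makes explicit that the forcing facts are $i_{X_H}\omega=dH$ together with the hypothesis $\theta(X_H)\neq 0$. Your conclusion and all intermediate identities check out, including the normalization $(d\theta)^{n-1}(e_1,f_1,\dots,e_{n-1},f_{n-1})=(n-1)!$.
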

\begin{proof}
Since $\theta(X_H(x)) \not= 0$, we have for $L_x =\ker \theta_{x_{\mid V_H(c) }} $
$$
L_x\oplus \Span \{X_H(x) \} = T_xV_H(c)
$$
and, since $\omega_x =d \theta_x$, the two form $d \theta $ is non-degenerate on $L_x$. Since $\theta(X_H(x)) \not= 0$, we obtain that $\theta \wedge d \theta^{n-1} $ is non zero on $T_x  V_H(c)$ for all $x \in V_H(c)$.
\end{proof}

A point $x \in V$ is called periodic if there exists $T>0$ such that $\phi_H^T(x) = x$. The period $T$ is called prime period if $\phi_H^t(x) \not= x$ for all $t \in (0,T)$.
A periodic point $x \in V_H(c)$  with prime period $T$ is called quasi-elliptic if  the linearized Poincar\'e map $D\phi_H^T(x): L_x \to  L_x$ restricted to the contact structure contains a non real eigenvalue with norm one.
If all eigenvalues of  $D\phi_H^T(x): L_x \to  L_x$   do not have norm one the periodic point is called hyperbolic. This notions do not depend on the choice of a chosen point on the orbit
 $\phi_H^t(x)$.\\
 We start to recall results of Newhouse \cite{SN77} on the generic existence of quasi-elliptic and hyperbolic closed orbits for Hamiltonian systems. 
 
\begin{theorem}\label{thm:Newhouse1}
Let $(V,\omega)$ be a symplectic manifold with $\dim M =2n \ge 4$ and $H_0: V \to \R$ be a smooth Hamiltonian with a compact regular energy level $H^{-1}_0(1)$. Then there exists a $C^2$ neighborhood $U \subset C^{\infty}(V)$ of $H_0$ and a dense set
$Q \subset U$ such that for all $H \in Q$ either the Hamiltonian flow $\phi^t_H$ on $H^{-1}(1)$ is Anosov or has a closed quasi-elliptic orbit.
\begin{remark}
 Newhouse  proves this theorem in details in the case of Hamiltonian diffeomorphisms in Theorem 1.3 of his paper. For the flow case he uses the work \cite{Ta70} of Takens. 
\end{remark}

\end{theorem}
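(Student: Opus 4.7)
The plan is to combine Newhouse's theorem for symplectomorphisms (Theorem 1.3 of \cite{SN77}) with a Poincar\'e section reduction, implemented in the Hamiltonian category by Takens \cite{Ta70}. The strategy is a dichotomy: either the flow on $H^{-1}(1)$ is Anosov, or a $C^2$-small perturbation produces a closed quasi-elliptic orbit.

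Since $c=1$ is a regular value of $H_0$ and $H_0^{-1}(1)$ is compact, the implicit function theorem lets us shrink $U$ so that every $H \in U$ has $1$ as a regular value and $H^{-1}(1)$ as a compact $(2n-1)$-manifold depending continuously on $H$. The Anosov property on such compact invariant hypersurfaces is $C^1$-open, so $A := \{ H \in U : \phi_H^t \text{ is Anosov on } H^{-1}(1) \}$ is open in $U$. It suffices to show that for each $H \in U \setminus \cl(A)$ and each $\varepsilon > 0$ there exists $H' \in U$ with $\|H - H'\|_{C^2} < \varepsilon$ whose flow carries a closed quasi-elliptic orbit; then $Q$ may be taken as $A$ together with all such approximations.

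Fix such $H$. Standard Hamiltonian perturbation results (as exploited by Newhouse) allow us to assume, after a first $C^2$-small perturbation of $H$, that the flow on $H^{-1}(1)$ admits a closed orbit $\gamma$ whose linearized Poincar\'e map fails to be uniformly hyperbolic: such a $\gamma$ must exist, otherwise dominated splitting along closed orbits together with density of closed orbits in the non-wandering set of a generic symplectic system would force Anosov, contradicting $H \notin \cl(A)$. Choose $x_0 \in \gamma$ and a smooth local transversal $\Sigma \subset H^{-1}(1)$ through $x_0$, with $T_{x_0}\Sigma$ a chosen complement $L_{x_0}$ of $R_{x_0} = \Span\{X_H(x_0)\}$ in $T_{x_0} H^{-1}(1)$. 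By the discussion preceding the theorem, $\omega$ is non-degenerate on $L_{x_0}$, so $(\Sigma, \omega|_{\Sigma})$ is a symplectic manifold of dimension $2n-2 \geq 2$ and the first return map $P$ is a local symplectomorphism fixing $x_0$. Newhouse's Theorem 1.3 of \cite{SN77} applied to $P$ then produces a $C^2$-small symplectic perturbation $P'$ of $P$, supported near $x_0$, possessing a quasi-elliptic periodic point.

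The concluding step is Takens' realization lemma \cite{Ta70}: any localized symplectic perturbation of the Poincar\'e return map of $\gamma$ can be realized by a $C^2$-small perturbation of $H$ itself, supported in a flow-box neighborhood of $\gamma$ and preserving the regular energy level $1$. Applying this to $P'$ yields the desired Hamiltonian $H'$ whose flow has a closed quasi-elliptic orbit. The main technical obstacle, and the reason for the $C^2$ topology in the statement, is the joint $C^2$ control required in both Newhouse's genericity result for symplectomorphisms and in Takens' realization: integrating an infinitesimal symplectomorphism perturbation into a smooth compactly supported Hamiltonian with $C^2$ bounds on the function itself, rather than merely on the induced vector field, is the delicate technical content of \cite{SN77, Ta70} and is unavailable in a weaker topology.
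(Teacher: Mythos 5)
The paper's own ``proof'' of this statement is just the attached remark: it delegates entirely to Newhouse's Theorem~1.3 and to Takens. You have correctly identified those same two sources, and your top-level architecture (isolate the open set of Anosov systems, and for the complement perturb a Poincar\'e return map and re-integrate via Takens) is a plausible skeleton of what Newhouse had in mind. So in that sense you are on the paper's track.

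However, the two crucial intermediate steps are asserted rather than argued, and one of them is a genuine misuse. First, the existence, after a $C^2$-small perturbation, of a closed orbit with a non-uniformly-hyperbolic linearized Poincar\'e map is exactly the nontrivial content of Newhouse's dichotomy; you justify it by ``dominated splitting along closed orbits together with density of closed orbits in the non-wandering set of a generic symplectic system would force Anosov,'' but density of closed orbits is a $C^1$ (Pugh--Robinson) statement, not a $C^2$ one, and the implication from periodic hyperbolicity to global Anosov is itself a theorem (Ma\~n\'e/Franks type) that cannot be invoked as though obvious. Second, you then ``apply Newhouse's Theorem~1.3 to $P$,'' but Theorem~1.3 is a genericity statement for symplectomorphisms of a compact symplectic manifold; $P$ is only a germ of a local symplectomorphism on a transversal $\Sigma$, for which ``Anosov'' is meaningless and for which an elliptic periodic point produced by that theorem need not lie at, or even near, the fixed point $x_0$. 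What is actually needed there is a local perturbation lemma for the eigenvalues of the linearized return map at a weakly hyperbolic fixed point (this is closer in spirit to the material in Takens' paper, and to Frank's-type perturbation lemmas in the symplectic category), not a black-box invocation of the global genericity theorem. As written the argument is circular: it uses the global dichotomy to produce the orbit it then uses to re-derive the global dichotomy.

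Since the statement is proved in Newhouse \cite{SN77} for diffeomorphisms and the flow case rests on \cite{Ta70} (as the paper's remark says), the honest thing to write in this paper is exactly that citation; if you want to reconstruct the proof for flows you need to (i) replace the density-of-closed-orbits step by the actual content of Newhouse's argument, and (ii) replace the appeal to Theorem~1.3 on the local return map by an explicit eigenvalue perturbation lemma whose realization in the Hamiltonian category is then supplied by Takens.
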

\section{Co-geodesic Finsler flows and Hamiltonian systems}
Let $T^*M$ be the cotangent bundle of a smooth manifold $M$ and $\pi: T^*M \to M$ the canonical projection. Then the canonical $1$-form $\alpha$ on  $T^*M$ is defined by
$$
\alpha_v(\xi) = v( D\pi(v)( \xi))
$$
for $v \in T^*M$ and $\xi \in  T_vT^*M$. Furthermore, $ \omega = -d \alpha$ defines the canonical symplectic structure on $T^*M$. 
In order to relate Finsler metrics to Hamiltonians on $T^*M$ it is convenient to define them on the cotangent bundle. A Finsler co-metric is a continuous map $F: T^*M \to \R$ which is smooth outside the zero section. Furthermore,
\begin{enumerate}
\item
$F(v) \geq 0 $ and $F(v) = 0$ if and only if $v = 0$,
\item $F$ is 1-homogeneous i.e.
$F( \lambda v) = \lambda F(v)$ for all $\lambda \ge 0$,
\item for each $u \in T^*M$ the symmetric $2$-form $g_u^F: T_{\pi u}^*M \times T_{\pi u}^*M \to \R$ given by
$$
g_u^F(v,w) :=\ppnn t s F^2(u +sv +t w)
$$
is positive definite.
\end{enumerate}
A  Finsler co-metric $F: T^*M \to \R$ is called reversible if 
$$
F(v) = F(-v)
$$
for all $v \in T^*M$.
We denote by
$$
S^*M :=\{ v \in T^*M \mid F(v) = 1 \}
$$
the unit cotangent bundle of $F$. Furthermore, the canonical $1$-form $\alpha$ defines a contact structure on $S^*M$. The Reeb flow generated by the Reeb vector field of the contact structure is called the co-geodesic flow. 
 Furthermore, the Reeb vector field coincides with the Hamiltonian vector field $X_H$ of the Hamiltonian $H = \frac{1}{2}F^2$ on the symplectic manifold $(T^*M, -d \alpha)$ restricted to the energy shell $H^{-1}(\frac{1}{2})$.
Using the Legendre transform one can identify Finsler co-metrics with Finsler metrics on $TM$. For more details on Finsler metrics see \cite{Bao2000}, \cite{Chern2005}. \\

Now we provide a condition which implies that the Hamiltonian flow is up to a time change the geodesic flow of some Finsler metric $F:  T^*M \to \R$.
\begin{proposition}\label{prop:metrics}
Let $M$ be a compact manifold, $H: T^*M \to \R$ be a fiberwise strictly convex Hamiltonian such that the zero section is contained in the sublevel set $H^{-1}((-\infty,1))$ and that $\pi H^{-1}(1) =M$ holds.
Then there exists a Finsler metric $F:  T^*M \to \R$ such that
${(F^2)}^{-1}(1) = H^{-1}(1)$. In particular, the Finsler flow of $F$ agrees with the Hamiltonian flow $H$ on $H^{-1}(1)$ up to reparametrization.  Moreover, if $H(-u) =1$ for all $u \in  H^{-1}(1)$ the Finsler metric is reversible.
\end{proposition}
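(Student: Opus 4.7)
The natural candidate is the Minkowski functional of the fiberwise sublevel bodies of $H$. For each $p \in M$, set $K_p := \{u \in T_p^*M : H(u) \le 1\}$ and define
$$ F(u) := \inf\{\lambda > 0 : u \in \lambda K_{\pi(u)}\} $$
for $u \ne 0_p$, together with $F(0_p) := 0$. By construction, $F$ is positively $1$-homogeneous and satisfies $(F^2)^{-1}(1) = F^{-1}(1) = \partial K = H^{-1}(1)$, which is already the desired identification of the energy hypersurfaces.

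My first task would be to show that each $K_p$ is a compact, strictly convex body with smooth boundary containing $0_p$ in its interior. Convexity and strict convexity of $K_p$ come from fiberwise strict convexity of $H$; interiority of $0_p$ comes from $H(0_p) < 1$ together with continuity; non-emptiness of $\partial K_p$ comes from $\pi H^{-1}(1) = M$; and smoothness of $\partial K_p$ comes from the fact that the fiberwise differential of $H$ is non-zero at every $u \in \partial K_p$ (since $H$ restricted to a fiber attains its minimum strictly below $1$, so $\{H = 1\}$ contains no fiberwise critical point). From this it follows that $F$ is continuous on $T^*M$ and smooth on $T^*M \setminus 0$ by the implicit function theorem applied to the defining equation $H(u/F(u)) = 1$.

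For the Finsler axioms, (1) and (2) are built into the definition. Axiom (3), positive definiteness of $g_u^F$ on each fiber, is the standard convex-analytic equivalence between strict convexity of the unit ball $K_p$ and positive definiteness of the Hessian of the squared Minkowski functional at every non-zero point. Since both $H$ and $\frac{1}{2}F^2$ are then Hamiltonians whose regular level sets coincide, namely $H^{-1}(1) = (\frac{1}{2}F^2)^{-1}(\frac{1}{2})$, the discussion preceding the proposition on characteristic line fields shows that $X_H$ and $X_{F^2/2}$ are parallel along this common level set, so the two flows agree up to reparametrization. Finally, if $H(-u) = 1$ for every $u \in H^{-1}(1)$, then $\partial K_p = -\partial K_p$; a convex body with centrally symmetric boundary is itself centrally symmetric, hence $K_p = -K_p$ and the Minkowski functional is even, giving reversibility $F(-u) = F(u)$.

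The main technical point I expect to dwell on is the combined regularity and positive-definiteness step: one must convert fiberwise strict convexity of $H$ (a condition about $D^2 H$ restricted to level sets) into positive definiteness of $g^F$ (a condition about the Hessian of the $2$-homogeneous function $F^2$ at every non-zero $u$). This uses strict convexity in an essential way, and leans on the standard convex-geometric fact that the squared gauge of a smooth strictly convex body has positive definite Hessian away from the origin. Once this is in hand, the flow-reparametrization and reversibility claims are immediate consequences of the characteristic-line-field discussion already recorded earlier in the paper.
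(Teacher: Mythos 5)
Your proposal takes essentially the same route as the paper: the Minkowski functional of the fiberwise sublevel body $K_p$ is exactly the function the paper defines by $F(\lambda u)=\lambda$ for $u\in H^{-1}(1)$, well-definedness rests on the same strict-convexity argument for $t\mapsto H(tu)$, flow agreement follows from the same characteristic-line-field discussion, and reversibility from the same symmetry observation. The only real difference is in the positive-definiteness step: you defer to the standard convex-geometric fact that the squared gauge of a smooth strongly convex body has positive definite vertical Hessian, while the paper carries out the (essentially equivalent) computation directly by relating $\grad F^2$ to $\grad H$ along $H^{-1}(1)$; for a complete write-up you would still need to verify (or cite in sufficient generality) that the fiberwise positive definiteness of $D^2H$ on $T_p^*M$ indeed yields positive definiteness of $g^F_u$ for all $u\neq 0$ via the block decomposition $\Span\{u\}\oplus\ker dF(u)$, which is exactly what the paper's Hessian manipulation makes explicit.
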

\begin{proof}
Define for each $\lambda \geq 0$ and $u \in H^{-1}(1)$
$$
F( \lambda u) = \lambda.
$$
The function $h_u : (0,\infty) \to \R$, $t \mapsto H(tu)$ is strictly convex i.e. $h_u'' > 0$ and thereby $h'_u$ is strictly increasing.
By assumption $h_u(0) < h_u(1) = 1$. Hence, due to the mean value theorem, $h_u(t) < 1$ for all $t \in (0,1)$ and $h_u(t) > 1$ for all $t > 1$.
This implies that $F$ is well defined.\\
Since  by the definition of $F$ the level sets $H^{-1}(1)$ and $(F^2)^{-1}(1)$ coincide, the fiberwise gradient of $H$ and $F^2$ with respect to some Riemannian co-metric on $M$ are linearly dependent at points in $H^{-1}(1)$, i.e. there exists a function $f : H^{-1}(1) \to \R$ such that
$\grad F^2(u) = f(u) \grad H(u)$.
The fiberwise strict convexity of $F^2$ can thereby be traced back to that of $H$. In particular, for all $u \in H^{-1}(1)$ and all $v,w \in T_{\pi u}^*M$ it is
\begin{align*}
\langle D_w \grad F^2(u) , v \rangle
= f(u) \, \langle D_w \grad H(u) , v \rangle.
\end{align*}
The fiberwise strict convexity of $H$ and the $2$-homogeneity of $F^2$ imply
\begin{align*}
\langle \grad H(u) , u \rangle = h_u'(1) > 1, && f(u) \langle \grad H(u) , u \rangle = \langle \grad F^2(u) , u \rangle = 2.
\end{align*}
Hence, $f(u)$ is positive and therefore $F^2$ is fiberwise strictly convex.
In particular, $F$ is a Finsler metric and by the discussion above its geodesic flow agrees up to parametrization with the Hamiltonian flow on $H^{-1}(1)$. \\
Furthermore, if $H(-u) =1$ for all $u \in  H^{-1}(1)$ then   $F(v) = F(-v)$ for all $v \in T^*M$ is the consequence of the definition of $F$.
\end{proof}
\begin{remark} We are grateful to Marco Mazzuchelli to inform us that Proposition \ref{prop:metrics} can be also deduced from Corollary 2 in \cite{CIP98}. The reason is that by assumption the zero section is contained in $H^{-1}((-\infty,1))$ which implies that $1$ is larger than Ma\~{n}\'{e}'s
critical value. However, since our approach is more elementary and explicit we keep the proof for the convenience of the reader.
\end{remark}
\section{Closed geodesics and characterization of Anosov metrics}
The purpose of this section is to prove  Theorem  \ref{thm:charact.} and Theorem \ref{thm:stable-ergodicity} stated in the introduction.
In the proofs we will use the following characterization of Anosov metrics.
\begin{theorem} \label{thm:conj1}
Let $M$ be a closed manifold and denote by $\mathcal R (M) $ resp. $\mathcal F(M)$ the set of  all Riemannian resp. Finsler metrics without conjugate points. Then the $C^2$ interior of $\mathcal R (M)$ resp.  $\mathcal F (M)$
are precisely the Anosov metrics.
\end{theorem}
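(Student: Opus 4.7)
The plan is to establish the claim by proving two inclusions. The easy inclusion, that every Anosov metric lies in the $C^2$ interior of $\mathcal{R}(M)$ (resp.\ $\mathcal{F}(M)$), combines two classical facts: Anosov flows are $C^1$ structurally stable, and a $C^2$-small perturbation of a metric produces a $C^1$-small perturbation of the geodesic vector field (since the latter depends on first derivatives of the metric), so the set of Anosov Riemannian (resp.\ Finsler) metrics is $C^2$ open in the space of metrics. Moreover, by Klingenberg \cite{wk71} in the Riemannian case and Paternain--Paternain \cite{PP94} in the Finsler case, every Anosov geodesic flow is free of conjugate points. Together these place the open set of Anosov metrics inside $\mathcal{R}(M)$ (resp.\ $\mathcal{F}(M)$), hence inside its $C^2$ interior.

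For the reverse inclusion I would argue differently in the Finsler case and the Riemannian surface case. In the Finsler case I would pass to the cotangent picture and realize the co-geodesic flow on $S^*M$ as the Hamiltonian flow of $H=\tfrac12 F^2$. For $F$ in the $C^2$ interior of $\mathcal{F}(M)$ I would introduce the Green stable and unstable bundles $E^s,E^u\subset TS^*M$ along orbits, obtained as limits of Jacobi fields over longer and longer intervals; absence of conjugate points on a whole $C^2$ neighborhood of $F$ guarantees that these Lagrangian subbundles are well-defined and continuous on $S^*M$. The core step is to show $E^s\cap E^u=0$ everywhere, so that $TS^*M=E^s\oplus E^u\oplus E^0$ is a continuous transverse invariant splitting, at which point a criterion of Ma\~n\'e for convex Hamiltonians upgrades this to an Anosov splitting. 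To rule out $E^s\cap E^u\neq 0$ at some $p\in S^*M$, I would construct a $C^2$-small fiberwise convex perturbation of $H$, supported in a small Darboux chart near a point of the orbit of $p$, whose effect on the Riccati equation forces a conjugate pair along that orbit; by Proposition \ref{prop:metrics} the perturbed Hamiltonian corresponds to a Finsler metric $C^2$-close to $F$, contradicting the hypothesis that $F$ lies in the interior of $\mathcal{F}(M)$.

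For Riemannian metrics on a closed surface the allowed perturbations are much more rigid, since one may only perturb the quadratic form on $TM$ rather than the Hamiltonian on $T^*M$, and the Finsler argument does not transfer verbatim. Here I would appeal to the recent theorem of Contreras and Mazzucchelli, which states exactly that a closed Riemannian surface in the $C^2$ interior of $\mathcal{R}(M)$ has Anosov geodesic flow. The main obstacle, in both categories, is precisely this perturbation step: given a nonzero vector in $E^s\cap E^u$ at some point, one must exhibit a $C^2$-small perturbation that simultaneously (i) stays inside the Finsler or Riemannian class, and (ii) produces a conjugate pair along a prescribed geodesic segment. The greater Hamiltonian flexibility keeps the Finsler case manageable, while the restriction to quadratic forms in two dimensions is what forces the Riemannian surface case to rely on the non-trivial construction of Contreras and Mazzucchelli.
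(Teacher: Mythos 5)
Your easy inclusion is exactly the paper's: structural stability gives $C^2$ openness of the Anosov condition, and Klingenberg \cite{wk71} resp.\ Paternain--Paternain \cite{PP94} give absence of conjugate points, so Anosov metrics lie in the interior of $\mathcal{R}(M)$ resp.\ $\mathcal{F}(M)$. For the hard inclusion, your Finsler sketch via Green bundles, transversality $E^s\cap E^u=0$, and a perturbation that would force conjugate points is indeed the strategy underlying the reference the paper invokes (Contreras--Iturriaga--Sandoval \cite{CIS98}), which in turn rests on Eberlein's characterization \cite{Eb73}. The paper simply cites these results, whereas you outline the mechanism; that is fine, though the decisive perturbation lemma is described rather than proved.

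The Riemannian part of your argument has a genuine gap. Theorem \ref{thm:conj1} is stated for closed manifolds of arbitrary dimension, but you treat only the surface case, so dimensions $\ge 3$ are simply not covered. Worse, the theorem you invoke is the wrong one: the Contreras--Mazzucchelli result \cite{CM} (Theorem \ref{thm:CM} in the paper) is a $C^2$-dense alternative between Anosov flows and the existence of a non-hyperbolic closed orbit --- it concerns closed geodesics, not conjugate points, and the paper uses it to prove Theorem \ref{thm:charact.}, not Theorem \ref{thm:conj1}. The correct reference for the Riemannian direction is Ruggiero \cite{Ru91}, who proved in all dimensions that the $C^2$ interior of the set of conjugate-point-free Riemannian metrics consists of Anosov metrics; his proof overcomes precisely the rigidity you identify (perturbations confined to quadratic forms on $TM$). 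So the Green-bundle scheme you describe for Finsler metrics does adapt to the Riemannian category, and Ruggiero's adaptation of it is what the paper actually cites.
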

\begin{proof}
As mentioned above Anosov Riemannian or Finsler metrics have no conjugate points. From structural stability follows that these metrics are $C^2$ open. The reverse assertion was proved by Ruggiero \cite{Ru91} using a characterization of Anosov flows by Eberlein \cite{Eb73}. 
His result has been extended by Contreras et al. in \cite{CIS98} for Finsler metrics.
\end{proof}
\begin{proof}[Proof of Theorem \ref{thm:charact.} in the Finsler case.]
Let $M$ be a closed manifold and
$\Int(\mathcal F_{\mathrm{hyp}}(M))$ be the $C^2$ interior of the set of all Finsler co-metrics all of whose closed geodesics are hyperbolic. Let $V = T^*M$ be the cotangent bundle equipped
with the canonical symplectic structure $\omega$.
Consider a Finsler co-metric $F \in \Int(\mathcal F_{\mathrm{hyp}}(M))$. By Theorem \ref{thm:Newhouse1} of Newhouse
there is a sequence of Hamiltonian functions $H_n$ which converges in the $C^2$ topology to $F$ such that the Hamiltonian flow of $H_n$ on the energy surface $H_n^{-1}(1)$ is either Anosov or contains a quasi-elliptic closed orbit.
We can assume that $H_n$ is fiberwise strictly convex, that $0 \in H^{-1}((0,1))$ and that $\pi H^{-1}(1) = M$. Let $F_n$ be the Finsler metrics provided by Proposition \ref{prop:metrics}. Since they also converge in the $C^2$ topology to $F \in \Int(\mathcal F_{\mathrm{hyp}}(M))$ we can assume that all closed orbits of the co-metrics $F_n$ are hyperbolic. But, since the Hamiltonian flow of $H_n$ on  $H_n^{-1}(1)$ agrees up to a time change with
the geodesic Finsler flow of $F_n$, all closed orbits on $H_n^{-1}(1)$ are hyperbolic as well and hence the flow is Anosov. In particular, the flows $\phi_{F_n}^t$ are Anosov and therefore free of conjugate points.
Since the set of metrics without conjugate points is $C^2$ closed, we obtain that  the metric $F$ has no conjugate points as well.
Due to  Theorem  $\ref{thm:conj1}$, the set  $\Int(\mathcal F_{\mathrm{hyp}}(M))$  consists of Anosov metrics. 
\end{proof}
\begin{remark}
Note that it is not possible to deduce Theorem \ref{thm:charact.}  from Newhouse's Theorem \ref{thm:Newhouse1}
in the Riemannian case. 
However, for surfaces there is a replacement by the following theorem of Contreras and Mazzucchelli  which is proved among other things in \cite{CM}.
\end{remark}
\begin{theorem} \label{thm:CM}
Let $M$ be a closed surface. Then there exists a $C^2$ dense set of Riemannian metrics $Q$ such that for all $g \in Q$ either its geodesic flow is Anosov or
contains a non-hyperbolic closed orbit.
\end{theorem}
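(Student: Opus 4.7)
The goal is to show that for a $C^2$-generic Riemannian metric $g$ on a closed surface $M$, either its geodesic flow is Anosov or it possesses a non-hyperbolic closed geodesic. Equivalently, I would aim to prove that the $C^2$ interior of the set of Riemannian metrics whose closed geodesics are all hyperbolic consists of Anosov metrics, and then deduce $C^2$ density of the required set $Q$ by a standard perturbation-and-approximation argument. The strategy combines a Ma\~{n}\'{e}-type dichotomy for conservative flows on three-manifolds with a Franks-type perturbation lemma tailored to the Riemannian class on surfaces.

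First, I would invoke (or reprove) a dichotomy for area-preserving flows on $SM$: if the geodesic flow of $g$ is not uniformly hyperbolic on its whole phase space, then either there already exists a closed geodesic whose linearized Poincar\'e map is not hyperbolic, or the hyperbolicity of the periodic set degenerates, in the sense that there are hyperbolic closed orbits with Lyapunov exponents tending to zero and with stable/unstable directions whose angle tends to zero along the orbit. In the degenerate case, I would use a closing lemma adapted to geodesic flows together with the lack of a dominated splitting to locate a long closed geodesic whose Poincar\'e map is a $2\times 2$ symplectic matrix arbitrarily close to a parabolic element, i.e.\ near the boundary of the hyperbolic locus of $\mathrm{Sp}(2,\R)$.

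The key second step is a perturbation lemma in the spirit of Franks, but within the space of Riemannian metrics: given a closed geodesic $\gamma$ of $g$ with linearized Poincar\'e map $P\in \mathrm{Sp}(2,\R)$ and $\eps>0$, construct a $C^2$-small perturbation $g'$ of $g$ such that $\gamma$ is still a closed geodesic of $g'$ and the new Poincar\'e map $P'$ may be prescribed arbitrarily inside a neighborhood of $P$ in $\mathrm{Sp}(2,\R)$. On a surface this is facilitated by the fact that the Jacobi equation along $\gamma$ reduces to a scalar Hill equation governed by the Gaussian curvature $K$ along $\gamma$, so modifying $P$ amounts to modifying $K$ along $\gamma$ via bump functions supported in a thin tubular neighborhood, arranged so that the length and direction of $\gamma$ are preserved. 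With this lemma in hand, if $g$ lies in the $C^2$ interior of metrics with all closed geodesics hyperbolic, I can push the Poincar\'e map of any near-parabolic closed geodesic provided by the dichotomy across the parabolic boundary into the elliptic region, contradicting the interior assumption unless the original $g$ was already Anosov.

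The main obstacle, and the reason why the result does not immediately generalize beyond dimension two, is the Franks-type perturbation lemma within the Riemannian class. In the Finsler case one may essentially perturb the Hamiltonian freely and invoke Newhouse's Theorem \ref{thm:Newhouse1}, but a Riemannian perturbation is constrained to the quadratic-in-momentum form $\tfrac{1}{2} g^{ij}(x)p_ip_j$, so modifications of the Poincar\'e map must be realized through metric bumps rather than arbitrary symplectic deformations. On surfaces the scalar nature of the curvature along a geodesic makes such realizations feasible, but the analogous statement in higher dimensions requires simultaneous control of a matrix-valued curvature operator and remains open, which is precisely why Theorem \ref{thm:charact.} is only claimed for surfaces in the Riemannian setting.
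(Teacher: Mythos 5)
The paper does not actually prove this theorem; it is a black-box citation of Contreras and Mazzucchelli \cite{CM}. Your outline (a Ma\~n\'e-type dichotomy combined with a Franks-type perturbation lemma inside the Riemannian class) is in the spirit of that work, but the step you invoke as ``a closing lemma adapted to geodesic flows'' is precisely where almost all the difficulty sits, not a standard tool one may call upon. In the $C^{2}$ category there is no classical closing lemma for geodesic flows; Contreras and Mazzucchelli obtain the needed control on periodic orbits through heavy symplectic-topological input---Birkhoff sections arising from broken book decompositions (Colin--Dehornoy--Rechtman) and spectral invariants from embedded contact homology---which reduces the dynamics to area-preserving surface diffeomorphisms where Pixton/Oliveira-type arguments apply. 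Treating this as a routine ingredient leaves a genuine gap: without it you never actually produce the near-parabolic closed geodesic that your Franks step is meant to perturb, so the contradiction never materializes.

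Your diagnosis of why the result is confined to surfaces is also off the mark. Franks-type perturbation lemmas within the Riemannian class, realizing prescribed changes of the linearized Poincar\'e map by curvature bumps, are available in higher dimensions as well (work of Contreras and of Lazrag--Rifford--Ruggiero); the scalar Hill-equation picture you describe is convenient on surfaces but is not the bottleneck. The actual obstruction to extending the dichotomy beyond surfaces is the closing-lemma/Birkhoff-section technology, which is intrinsic to Reeb flows on contact three-manifolds and currently has no higher-dimensional analogue. In short, the ingredient you foregrounded as the obstacle is comparatively under control, while the ingredient you glossed over is the crux.
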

\begin{remark}
For Riemannian metrics which do not contain contractible closed geodesics this result has been also obtained by Schulz \cite{Sch21}.
\end{remark}

Let $(M,F)$ be a Finsler manifold and let $v$ be
an elliptic periodic point, i.e. $v$ is an elliptic fixed point 
of every Poincar\'{e} map. Let $p$ be the local representation of a Poincar\'{e} map $P$ with respect to a symplectic local chart such that zero is an elliptic fixed point of $p$. There exists a symplectomorphism $\sigma$ with respect to the standard symplectic form on $\R^2$ such that $q := \sigma \circ p \circ \sigma^{-1}$ is of the form
\begin{align*} \textstyle
q (x) = 	A_{\varphi(x)} (x) + \mathcal{O}(\Vert x \Vert^4),
\end{align*}
where $A_{\varphi}$ denotes the rotation around zero in $\R^2$ with the angle $\varphi \in \R$ and where it is $\varphi(x) = \alpha + \beta \cdot \Vert x \Vert^2$ for constants $\alpha, \beta \in \R$. The local representation $q$ is called the Birkhoff normal form of $P$ and is independent of both the transverse cross section for $P$ and the initial choice of the symplectic local chart. The constants $\alpha$ and $\beta$ are symplectically invariant and thus are called Birkhoff invariants. A proof can be found in \cite{Mo77}. If $\beta$ is non-zero, then $v$ is called of twist type.\\
For an elliptic periodic point $v$ of twist type, due to Siegel and Moser \cite{SM71}, for every $\delta > 0$ there exists a simple closed curve $\gamma$ in the open $\delta$-ball at zero in $\R^2$ which has winding number one around zero and which is invariant under $q$. This gives rise to a flow-invariant torus
which divides the unit tangent bundle into two flow-invariant open sets of positive Liouville measure. Hence, an elliptic periodic point of twist type prevents the geodesic flow from being ergodic and, in particular, from being topologically transitive.\\

Using this we are able to characterize stably ergodic Riemannian or Finsler geodesic flows as stated in Theorem \ref{thm:stable-ergodicity} in the introduction.

\begin{proof}[Proof of Theorem \ref{thm:stable-ergodicity}]
Suppose $F \in \Int(\mathcal F_{\mathrm{erg}}(M))$ has a closed non hyperbolic geodesic.
Due to Rademacher and Taimanov \cite{RT20}, there exists $F' \in U$ such that $F'$ has an elliptic orbit of twist type. Their result is based on a perturbation result of Carballo and Miranda \cite{CM13} for Tonelli Hamiltonian flows. But then, due to the discussion above, the geodesic flow of $F'$ is neither ergodic nor topologically transitive. Hence all closed orbits of the geodesic flow of Finsler metrics in $\Int(\mathcal F_{\mathrm{erg}}(M))$ are hyperbolic and by Theorem \ref{thm:charact.} the metrics in $\Int(\mathcal F_{\mathrm{erg}}(M))$ are all Anosov. The proof in the Riemannian case proceeds similarly and uses the perturbation result by Klingenberg and Takens \cite{KT72} (see also \cite{Kli78}).
\end{proof}

\bibliographystyle{amsalpha}

\bibliography{Anosov}

\end{document}